\theoremstyle{plain}
\newtheorem{Thm}{Theorem}
\newtheorem{Lem}{Lemma}
\newtheorem{Cor}{Corollary}
\theoremstyle{remark}
\newcommand{\N}{{\mathbb N}}
\newcommand{\R}{{\mathbb R}}
\newcommand{\WW}{{\mathbb W}}
\newfont{\ccc}{eusm10 scaled\magstep1}
\newcommand{\W}{\hbox{\ccc W}}
\newcommand{\V}{\hbox{\ccc V}}
\newcommand{\eq}[1]{\eqref{E#1}}
\newcommand{\Eq}[2]{\begin{equation}\label{E#1}#2\end{equation}}
\def\and{\qquad\hbox{and}\qquad}
\def\for{\qquad\hbox{for}\quad}
\def\D#1{D^{#1}([a,b])}
\def\Wone#1;#2.{{\W\Bigl(\begin{array}{c}
  #1 \\ #2 \end{array}\Bigr)}}
\def\Wthree#1,#2,#3;#4,#5,#6.{{\W\Bigl(\begin{array}{ccc}
  #1, & #2, & #3 \\ #4, & #5, & #6 \end{array}\Bigr)}}
\def\Wfour#1,#2,#3,#4;#5,#6,#7,#8.{{\W\Bigl(\begin{array}{cccc}
  #1, & #2, & #3, & #4 \\ #5, & #6, & #7, & #8 \end{array}\Bigr)}}
\def\Wsix#1,#2,#3,#4;#5,#6,#7,#8.{{\W\Bigl(\begin{array}{cccccc}
  #1, & \dots, & #2, & #3, & \dots, & #4 \\
  #5, & \dots, & #6, & #7, & \dots, & #8 \end{array}\Bigr)}}
\def\Wseven#1,#2,#3,#4,#5;#6,#7,#8,#9.{{\W\Bigl(\begin{array}{ccccccc}
  #1, & \dots, & #2, & #3, & \dots, & #4, & #5 \\
  #6, & \dots, & #7, & #8, & \dots, & #9, & #9\end{array}\Bigr)}}
\begin{document}
\large

\date{\today}

\title{A General Mean Value Theorem}

\author{Zsolt P\'ales}
\address{Institute of Mathematics,
University of Debrecen, H-4010 Debrecen, Pf.\ 12, Hungary}
\email{pales@science.unideb.hu}

\subjclass{Primary 26A51, 26B25}
\keywords{Mean value theorem, Wronski determinant, Quasi differential operators}

\thanks{This research has been supported by the Hungarian
National Research Science Foundation (OTKA) Grant K111651.}

\begin{abstract}
In this note  a general a Cauchy-type mean value theorem for the ratio
of functional determinants is offered. It generalizes Cauchy's and
Taylor's mean value theorems as well as other classical mean value theorems.
\end{abstract}

\maketitle

\section{Introduction}

The aim of the present note is to offer a unified approach to most
of the mean value theorems known in elementary analysis.

Let $x_1,\dots,x_k$ be arbitrary points in the real interval $[a,b]$.
Then, one can uniquely determine a permutation $\pi$ of
the set $\{1,\dots,k\}$, $n\in\N$, $\xi_1<\cdots<\xi_n$ in $[a,b]$
and $k_1,\dots,k_n$ in $\N$ with $k_1+\cdots+k_n=k$ such that
\Eq{0}{
  (x_{\pi(1)},\dots,x_{\pi(k)})=
  (\underbrace{\xi_1,\dots,\xi_1}_{k_1\hbox{\tiny\ times}},
      \quad\dots,\quad
   \underbrace{\xi_n,\dots,\xi_n}_{k_n\hbox{\tiny\ times}}).
}
If $w_1,\dots,w_{m+k}:[a,b]\to\R$ is a system of $(k-1)$ times
differentiable functions $(m>0)$, and $u_1,\dots,u_{m+k}\in\R^m$
are vectors, then we define
\begin{eqnarray*}
  && \hspace{-4mm}
     \Wthree w_1,\dots,w_{m+k};u_1,\dots,u_{m+k}.(x_1,\dots,x_k)\\
  && \hspace{4mm}
   :=\left|\begin{array}{cccccccccc}
    u_{1,1}           & \dots             & u_{1,m} &
    w_1(\xi_1)        & \!\!\!\dots\!\!\! & w_1^{(k_1-1)}(\xi_1)
                      & \dots             &
    w_1(\xi_n)        & \!\!\!\dots\!\!\! & w_1^{(k_n-1)}(\xi_n) \\
    \vdots & & \vdots & \vdots & & \vdots & & \vdots & & \vdots \\
    u_{m+k,1}         & \dots             & u_{m+k,m} &
    w_{m+k}(\xi_1)    & \!\!\!\dots\!\!\! & w_{m+k}^{(k_1-1)}(\xi_1)
                      & \dots             &
    w_{m+k}(\xi_n)    & \!\!\!\dots\!\!\! & w_{m+k}^{(k_n-1)}(\xi_n)
   \end{array}\right|,
\end{eqnarray*}
where the right hand side of this equation is an $(m+k)\times (m+k)$
determinant, $w^{(i)}$ stands for the $i$th derivative of the function
$w$, $u_{i,j}$ denotes the $j$th coordinate of the vector $u_i$, and
$\xi_i$, $k_i$ is determined by \eq{0}.

We also allow $m$ to take the value $0$, with the following
notational conventions: $\R^0:=\{0\}$ and
\begin{eqnarray*}
  \Wthree w_1,\dots,w_k;u_1,\dots,u_k.(x_1,\dots,x_k)
  &:=& \W(w_1,\dots,w_k)(x_1,\dots,x_k)\\
  &:=& \left|\begin{array}{ccccccc}
        w_1(\xi_1)        & \!\!\!\dots\!\!\! & w_1^{(k_1-1)}(\xi_1)
                          & \dots &
        w_1(\xi_n)        & \!\!\!\dots\!\!\! & w_1^{(k_n-1)}(\xi_n) \\
        \vdots & & \vdots & & \vdots & & \vdots \\
        w_k(\xi_1)        & \!\!\!\dots\!\!\! & w_k^{(k_1-1)}(\xi_1)
                          & \dots &
        w_k(\xi_n)        & \!\!\!\dots\!\!\! & w_k^{(k_n-1)}(\xi_n)
       \end{array}\right|,
\end{eqnarray*}

Observe that if here $x_1=\cdots=x_k=\xi$, then the above definition
reduces to
$$
  \W(w_1,\dots,w_k)(\xi,\dots,\xi)
   =\left|\begin{array}{ccc}
        w_1(\xi)        & \cdots & w_1^{(k-1)}(\xi) \\
        \vdots          & \ddots & \vdots   \\
        w_k(\xi)  & \cdots & w_k^{(k-1)}(\xi)
    \end{array}\right|,
$$
which is known as the Wronski determinant of the system $w_1,\dots,w_k$.

The class of functions $f:[a,b]\to\R$ that are $k-1$ times continuously
differentiable on $[a,b]$ and $k$ times differentiable on the
open interval $]a,b[$ will be denoted by $D^k([a,b])$.

\medskip
Now we are able to formulate the main result of this paper.

\begin{Thm}
Let $1\le k$, $0\le m$ be integers and $u_1,\dots,u_{m+k}\in\R^m$
such that (if $0<m$ then) $u_1,\dots,u_m$ are linearly independent,
i.e.,
\Eq{1A}{
   \V_0:=
   \left|\begin{array}{lcl}
        u_{1,1}       & \cdots & u_{1,m} \\
        \ \ \vdots    & \ddots & \ \ \vdots   \\
        u_{m,1}     & \cdots & u_{m,m}
    \end{array}\right| \not=0.
}
In addition, let $w_1,\dots,w_{m+k}\in D^k([a,b])$ be a system of
functions satisfying
\Eq{1}{
  \V_n(\xi):=
  \Wthree w_1,\dots,w_{m+n};u_1,\dots,u_{m+n}.
   (\underbrace{\xi,\dots,\xi}_{n\hbox{\tiny\rm\ times}})\not=0
}
for all $\xi\in [a,b]$ and $n=1,\dots,k$. Then, for all nonidentical
points $x_1,\dots,x_{k+1}\in [a,b]$, vectors $p,q\in\R^m$ and functions
$f,g\in D^k([a,b])$, there exists an intermediate point $\xi\in]\min
x_i,\max x_i[$ such that
\Eq{2}{
  \Wfour w_1,\dots,w_{m+k},f;u_1,\dots,u_{m+k},p.(\xi,\dots,\xi)
      \cdot
  \Wfour w_1,\dots,w_{m+k},g;u_1,\dots,u_{m+k},q.(x_1,\dots,x_{k+1})
  \qquad\qquad
}
$$
  \qquad\qquad
  = \Wfour w_1,\dots,w_{m+k},g;u_1,\dots,u_{m+k},q.(\xi,\dots,\xi)
      \cdot
    \Wfour w_1,\dots,w_{m+k},f;u_1,\dots,u_{m+k},p.(x_1,\dots,x_{k+1}).
$$
\end{Thm}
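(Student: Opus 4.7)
Let $F$ and $G$ denote the two Wronski-type determinants on the right-hand side of \eq{2} evaluated at $(x_1,\dots,x_{k+1})$. Setting $h := G\cdot f - F\cdot g \in D^k([a,b])$ and $r := G\cdot p - F\cdot q \in \R^m$, multilinearity of the determinant in its last row yields
\begin{equation*}
\Wfour w_1,\dots,w_{m+k},h;u_1,\dots,u_{m+k},r.(x_1,\dots,x_{k+1}) = GF - FG = 0.
\end{equation*}
Thus it suffices to exhibit $\xi \in\, ]\min x_i,\max x_i[$ such that the same determinant with $(\xi,\dots,\xi)$ in place of $(x_1,\dots,x_{k+1})$ vanishes; substituting back the definitions of $h$ and $r$ then gives \eq{2}.

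\textbf{Step 2 (Reduction to $m=0$).} Using the linear independence of $u_1,\dots,u_m$ in \eq{1A}, for each $j=m+1,\dots,m+k$ I would write $u_j = \sum_{i=1}^{m} c_{j,i}u_i$ and set $\hat w_{j-m} := w_j - \sum_{i=1}^{m} c_{j,i}w_i$; similarly $r = \sum_{i=1}^{m} c_i u_i$ defines $\hat h := h - \sum_{i=1}^{m} c_i w_i$. Row operations with these coefficients put the matrix in block-triangular form and give
\begin{equation*}
\Wfour w_1,\dots,w_{m+k},h;u_1,\dots,u_{m+k},r.(y_1,\dots,y_{k+1}) = \V_0\cdot\W(\hat w_1,\dots,\hat w_k,\hat h)(y_1,\dots,y_{k+1}).
\end{equation*}
The conditions \eq{1} translate into $\W(\hat w_1,\dots,\hat w_n)(\xi,\dots,\xi)\neq 0$ for every $n=1,\dots,k$ and $\xi\in[a,b]$, so $\hat w_1,\dots,\hat w_k$ is an extended Chebyshev (ECT) system on $[a,b]$.

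\textbf{Step 3 (Quasi-derivatives and iterated Rolle).} Because an ECT-system of dimension $k$ admits no nontrivial linear combination with $k+1$ zeros counted with multiplicity, the vanishing of $\W(\hat w_1,\dots,\hat w_k,\hat h)(x_1,\dots,x_{k+1})$ provides unique constants $c_1,\dots,c_k$ making $\psi := \hat h - \sum_{i=1}^{k} c_i\hat w_i$ vanish at each $\xi_j$ to order $k_j$---totalling $k+1$ zeros in $[\min x_i,\max x_i]$. The nonvanishing Wronskians furnish a P\'olya-type factorization with positive $\rho_1,\dots,\rho_{k+1}$ and associated quasi-differential operators $Q_0\phi := \phi/\rho_1$, $Q_j\phi := \rho_{j+1}^{-1}(Q_{j-1}\phi)'$, which annihilate $\hat w_1,\dots,\hat w_k$ and satisfy
\begin{equation*}
Q_k\phi(\xi) = (\pm 1)\,\frac{\W(\hat w_1,\dots,\hat w_k,\phi)(\xi,\dots,\xi)}{\W(\hat w_1,\dots,\hat w_k)(\xi,\dots,\xi)}.
\end{equation*}
Iterated Rolle---after each differentiation one divides by a positive $\rho_{j+1}$---produces $\xi \in\, ]\min x_i,\max x_i[$ with $Q_k\psi(\xi) = Q_k\hat h(\xi) = 0$, equivalently $\W(\hat w_1,\dots,\hat w_k,\hat h)(\xi,\dots,\xi)=0$, which is what is required. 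The remaining zero lies strictly inside the convex hull because the $x_i$'s are nonidentical, so each multiplicity $k_j$ is at most $k$ and the endpoint multiplicities drop to zero after $k$ quasi-differentiations.

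The first two steps are routine linear algebra. The substantive obstacle is Step 3: producing the P\'olya factorization under only $D^k$-regularity, verifying that $Q_k$ computes (up to sign) the normalized Wronski-type determinant, and correctly tracking the multiplicities of the zeros of $\psi$ throughout the iterated Rolle argument when several of the $x_i$'s coincide.
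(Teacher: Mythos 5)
Your outline follows essentially the same route as the paper's own proof: an auxiliary function vanishing $k+1$ times counting multiplicities, a reduction of the $m>0$ case to $m=0$ by row operations based on \eq{1A}, and an iterated Rolle argument driven by the quasi-differential operators attached to the weight system. The packaging differs only cosmetically. You linearize first, replacing the pair $(f,p),(g,q)$ by the single pair $(h,r)=(Gf-Fg,\;Gp-Fq)$ so that a single determinant must be shown to vanish at some $\xi$; the paper instead keeps $f$ and $g$ together in one $(m+k+2)\times(m+k+2)$ determinant-valued function $F(x)$, whose $k+1$ zeros are immediate from its repeated columns (so no solvability argument for your constants $c_1,\dots,c_k$ is needed), and recovers the two factors of \eq{2} at the very end as cofactors of the last column. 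These are the same computation read in opposite directions.

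The one point at which your argument is not yet a proof is exactly the one you flag: producing the P\'olya factorization and the identity $Q_k\phi(\xi)=\pm\,\W(\hat w_1,\dots,\hat w_k,\phi)(\xi,\dots,\xi)/\W(\hat w_1,\dots,\hat w_k)(\xi,\dots,\xi)$ under the weak regularity $D^k([a,b])$ (only $k-1$ continuous derivatives on the closed interval, a $k$th derivative only on the open one). This is precisely the paper's Lemma 3, whose recursion \eq{5}, unwound $k$ times, is your quasi-derivative representation; the paper proves it not by citing ECT theory but by a short self-contained argument: both sides of \eq{5} are $n$th-order linear differential operators whose coefficient of $f^{(n)}$ is the nonvanishing $\V_n$, and both annihilate the $n$ linearly independent functions $v_1,\dots,v_n$ obtained from $w_{m+1},\dots,w_{m+n}$ by the $m=0$ reduction, so they must coincide. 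If you supply that lemma (or a citation valid for $D^k$ rather than $C^k$ systems), the rest of your outline is sound. Two small further points: the Rolle iteration should be run on $[\min x_i,\max x_i]$ rather than on $[a,b]$ so that $\xi$ lands in the asserted open interval $]\min x_i,\max x_i[$ (the hypothesis that the $x_i$ are nonidentical is what prevents all $k+1$ zeros from sitting at one endpoint of that subinterval), and the positivity you assume for the $\rho_j$ is neither automatic from \eq{1} nor needed --- nonvanishing suffices for the Rolle steps.
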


The proof of this theorem is given in the next section. Now we list
some of its consequences.

\begin{Cor} {\rm(\sl Cauchy's Mean Value Theorem.\rm)} Let $f,g\in
D^1[a,b]$ Then there exists $\xi\in]a,b[$ such that
$$
  f'(\xi)(g(a)-g(b))=g'(\xi)(f(a)-f(b)).
$$
\end{Cor}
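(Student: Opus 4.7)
The plan is to obtain this corollary as the simplest specialization of the main Theorem. I will set $k=1$, $m=0$, and choose the single auxiliary function $w_1\equiv 1$. With $m=0$, condition \eq{1A} is vacuous, while condition \eq{1} reduces to $\V_1(\xi)=w_1(\xi)=1\ne 0$ for every $\xi\in[a,b]$; hence the hypotheses of the Theorem are fulfilled.

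Next I pick the $k+1=2$ (automatically nonidentical) points $x_1=a$, $x_2=b$. The Theorem then delivers $\xi\in\,]\min(a,b),\max(a,b)[\,=\,]a,b[$ for which identity \eq{2} holds. Because $m=0$, every vector $u_i$, $p$, $q$ collapses to the sole element of $\R^0$, so each of the four determinants appearing in \eq{2} reduces to a plain $2\times 2$ Wronski-type determinant built from $w_1$ and $f$ (respectively $g$).

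The remaining step is the mechanical evaluation of these four determinants. With both evaluation points coinciding at $\xi$ (so $n=1$, $k_1=2$ in the notation of \eq{0}), the confluent Wronskian yields $\W(w_1,f)(\xi,\xi)=w_1(\xi)f'(\xi)-w_1'(\xi)f(\xi)=f'(\xi)$, and analogously $\W(w_1,g)(\xi,\xi)=g'(\xi)$. With the two evaluation points being $a$ and $b$ (so $n=2$, $k_1=k_2=1$), the determinant becomes $\W(w_1,f)(a,b)=w_1(a)f(b)-w_1(b)f(a)=f(b)-f(a)$, and similarly $\W(w_1,g)(a,b)=g(b)-g(a)$.

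Substituting these four values into \eq{2} gives $f'(\xi)\bigl(g(b)-g(a)\bigr)=g'(\xi)\bigl(f(b)-f(a)\bigr)$, which is equivalent to the stated identity after multiplication by $-1$. There is no genuine obstacle: the whole argument reduces to the choice $w_1\equiv 1$ (which trivially verifies the Wronski nondegeneracy), followed by the evaluation of two elementary $2\times 2$ determinants.
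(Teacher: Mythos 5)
Your proof is correct and follows exactly the paper's own route: the paper likewise proves this corollary by taking $k=1$, $m=0$, $w_1\equiv 1$, $x_1=a$, $x_2=b$ in Theorem 1 and reading off \eq{2}. Your explicit evaluation of the four $2\times 2$ determinants merely spells out what the paper calls ``immediate.''
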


\begin{proof} Let $k=1$, $m=0$, $w_1\equiv 1$ and $x_1=a$, $x_2=b$ in
Theorem 1. Then the statement follows immediately from \eq2.
\end{proof}

\begin{Cor} {\rm(\sl Taylor's Mean Value Theorem.\rm)} Let $f\in
D^k([a,b])$. Then, for all $x\in]a,b]$, there exists $\xi\in]a,b[$
such that
$$
  f(x)=f(a)+f'(a)(x-a)+\cdots
       +\frac{f^{(k-1)}(a)}{(k-1)!}(x-a)^{k-1}+
        \frac{f^{(k)}(\xi)}{k!}(x-a)^{k}.
$$
\end{Cor}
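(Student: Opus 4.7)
The plan is to apply Theorem~1 with $m=0$, the given $k$, the functions $w_i(t):=(t-a)^{i-1}/(i-1)!$ for $i=1,\dots,k$ and $g(t):=(t-a)^k/k!$, and the evaluation points $x_1=\cdots=x_k=a$, $x_{k+1}=x$. Since $x\ne a$ these points are not all identical, so the theorem is applicable, and the resulting intermediate point automatically lies in $]\min x_i,\max x_i[=]a,x[\,\subset\,]a,b[$.

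First I would verify the nondegeneracy hypothesis \eq{1}: for every $\xi\in[a,b]$ and $n=1,\dots,k$, the matrix $[w_i^{(j-1)}(\xi)]_{i,j=1}^n$ is lower triangular with diagonal entries $w_i^{(i-1)}(\xi)=1$, so $\V_n(\xi)=1$. Next I would compute the four determinants appearing in \eq{2}. Each $w_i$ is a polynomial of degree $i-1<k$, so $w_i^{(k)}\equiv 0$, and expanding the $(k+1)\times(k+1)$ determinant $\W(w_1,\dots,w_k,f)(\xi,\dots,\xi)$ along its last column leaves only the bottom entry, yielding $f^{(k)}(\xi)\V_k(\xi)=f^{(k)}(\xi)$; the same calculation with $g$ in place of $f$ gives $g^{(k)}(\xi)=1$. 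For the evaluation at $(a,\dots,a,x)$ the identity $w_i^{(j-1)}(a)=\delta_{i,j}$ turns the first $k$ columns into the $k\times k$ identity block; then subtracting $f^{(j-1)}(a)$ times row $j$ from the last row for $j=1,\dots,k$ annihilates the first $k$ entries of that row and turns its last entry into the Taylor remainder $R_k(x):=f(x)-\sum_{j=0}^{k-1}f^{(j)}(a)(x-a)^j/j!$, so the determinant equals $R_k(x)$. Because the Taylor polynomial of $g$ of degree $k-1$ at $a$ already vanishes, the parallel computation with $g$ yields $(x-a)^k/k!$.

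Substituting these four values into \eq{2} gives $f^{(k)}(\xi)\cdot(x-a)^k/k!=R_k(x)$, which is exactly Taylor's formula with remainder. The main obstacle is really just recognising the right building blocks: the polynomials $w_i(t)=(t-a)^{i-1}/(i-1)!$ are tailored so that both the nondegeneracy \eq{1} and the vanishing $w_i^{(k)}\equiv 0$ come for free, while $g(t)=(t-a)^k/k!$ is chosen precisely so that its degree $k-1$ Taylor polynomial at $a$ disappears; once these choices are made, the rest of the argument is elementary row reduction.
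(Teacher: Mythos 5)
Your proposal is correct and follows exactly the paper's own argument: same choice of $m=0$, $w_i(t)=(t-a)^{i-1}/(i-1)!$, $g(t)=(t-a)^k/k!$, and evaluation points $x_1=\cdots=x_k=a$, $x_{k+1}=x$, with the same four determinant evaluations. You merely supply more explicit detail (triangularity for \eq{1}, the last-column expansion, the row reduction producing the remainder) than the paper's ``simple computation yields.''
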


\begin{proof} Let $m=0$,
$$
  w_i(x)=\frac{(x-a)^{i-1}}{(i-1)!} \for i=1,\dots,k \and
   g(x)=\frac{(x-a)^k}{k!}.
$$
Then, for all $\xi\in[a,b]$,
$$
  \W(w_1)(\xi)=\cdots=\W(w_1,\dots,w_k)(\xi,\dots,\xi)=1,
$$
therefore, \eq{1A} and \eq{1} are satisfied. Thus, taking
$x_1=\cdots=x_k=a$ and $x_{k+1}=x$ in Theorem 1, we obtain that
there exists $\xi\in]a,x[$ satisfying
\Eq{3}{
  \W(w_1,\dots,w_k,f)(\xi,\dots,\xi,\xi)
        \cdot \W(w_1,\dots,w_k,g)(a,\dots,a,x)
  \qquad\qquad\qquad
}
$$
  \qquad\qquad\qquad
  =\W(w_1,\dots,w_k,g)(\xi,\dots,\xi,\xi)
         \cdot \W(w_1,\dots,w_k,f)(a,\dots,a,x).
$$
A simple computation yields that
$$
  \W(w_1,\dots,w_k,f)(a,\dots,a,x)=
  f(x)-f(a)-f'(a)(x-a)-\cdots-\frac{f^{(k-1)}(a)}{(k-1)!}(x-a)^{k-1},
$$
$$
  \W(w_1,\dots,w_k,g)(a,\dots,a,x)=\frac{(x-a)^k}{k!},
$$
furthermore
$$
  \W(w_1,\dots,w_k,f)(\xi,\dots,\xi,\xi)=f^{(k)}(\xi)\and
  \W(w_1,\dots,w_k,g)(\xi,\dots,\xi,\xi)=1.
$$
Thus, Taylor's theorem follows from \eq{3} at once.
\end{proof}

Let $w_1(x)=1,\dots,w_k(x)=x^{k-1},w_{k+1}=x^k$ for $x\in[a,b]$ and
let $a\le x_1\le \cdots\le x_{k+1}\le b$ with $a<x_{k+1}$ and $x_1<b$.
Then the ratio
$$
  [x_1,\dots,x_{k+1}]f:=
    \frac{\W(w_1,\dots,w_k,f)(x_1,\dots,x_k,x_{k+1})}
     {\W(w_1,\dots,w_k,w_{k+1})(x_1,\dots,x_k,x_{k+1})}
$$
is called the $k$th order divided difference of $f\in D^k([a,b])$
over the points $x_1,\dots,x_{k+1}$ (c.f. \cite[p. 45]{SCH}). Divided
differences are usually defined in an inductive way in the literature,
see e.g. \cite[Sect. 3.17]{AH} and \cite[Sect. 2.3]{HA}. The proof
of the equivalence of the above definition to the usual one can be
found in \cite[Theorem 2.51, p.47]{SCH}.

Concerning divided differences, the following result is well known
(c.f.\ \cite[p. 274]{AH} and \cite[(2.93)]{SCH}).

\begin{Cor} Let $f\in\D k$ and $a\le x_1\le \cdots\le x_{k+1}\le b$
with $x_1<x_{k+1}$. Then there exists $\xi\in]x_1,x_{k+1}[$ such that
\Eq{3A}{
  [x_1,\dots,x_{k+1}]f=\frac{f^{(k)}(\xi)}{k!}.
}
\end{Cor}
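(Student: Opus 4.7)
The plan is to specialize Theorem~1 to the monomial basis underlying the divided difference. I would take $m=0$, $w_i(x):=x^{i-1}$ for $i=1,\dots,k$, and $g(x):=x^k$, with $f$ and the points $x_1,\dots,x_{k+1}$ as in the statement. The assumption $x_1<x_{k+1}$ guarantees that the sampling points are nonidentical, so Theorem~1 is applicable, and $]\min x_i,\max x_i[=]x_1,x_{k+1}[$ ensures that the resulting intermediate point lies in the desired interval.

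Before invoking the theorem I would verify hypothesis \eq{1}. For $n\le k$ and $\xi\in[a,b]$, the matrix defining $\V_n(\xi)=\W(w_1,\dots,w_n)(\xi,\dots,\xi)$ has $(i,j)$-entry $w_i^{(j-1)}(\xi)$, which vanishes whenever $j>i$. The matrix is therefore lower triangular with diagonal entries $w_i^{(i-1)}(\xi)=(i-1)!$, and hence $\V_n(\xi)=\prod_{i=1}^{n}(i-1)!\ne 0$.

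The heart of the argument is the evaluation of the two Wronskians on the left-hand side of \eq{2} at the constant tuple. Since each $w_i$ has degree $i-1\le k-1$, the last column of the $(k+1)\times(k+1)$ matrix---the column of $k$-th derivatives at $\xi$---satisfies $w_i^{(k)}(\xi)=0$ for $i=1,\dots,k$. Expanding along that column reduces to the bottom-right cofactor, which is the lower-triangular Wronskian already computed; this gives
\[
  \W(w_1,\dots,w_k,f)(\xi,\dots,\xi)=f^{(k)}(\xi)\prod_{i=1}^{k}(i-1)!,\qquad
  \W(w_1,\dots,w_k,g)(\xi,\dots,\xi)=k!\prod_{i=1}^{k}(i-1)!.
\]
Substituting these two values into \eq{2}, the common factor $\prod_{i=1}^{k}(i-1)!$ cancels, and rearranging yields
\[
  \frac{f^{(k)}(\xi)}{k!}
   =\frac{\W(w_1,\dots,w_k,f)(x_1,\dots,x_{k+1})}{\W(w_1,\dots,w_k,g)(x_1,\dots,x_{k+1})}
   =[x_1,\dots,x_{k+1}]f,
\]
which is exactly \eq{3A}. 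I do not anticipate genuine obstacles: the only delicate step is recognizing the triangular structure of the single-point Wronskians, after which everything reduces to routine determinant bookkeeping.
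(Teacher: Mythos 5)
Your proposal is correct and follows essentially the same route as the paper: apply Theorem~1 with $m=0$, $w_i(x)=x^{i-1}$, $g(x)=x^k$, and identify the single-point determinants. The paper merely states the conclusion via the identity $[\xi,\dots,\xi]f=f^{(k)}(\xi)/k!$, whereas you verify it explicitly through the triangular structure of the confluent Wronskians; the substance is identical.
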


\begin{proof} Apply Theorem 1 when $m=0$ with the function
$g(x)=w_{k+1}(x)=x^k$.
Then we find that there exists $\xi\in]x_1,x_{k+1}[$ such that
$$
  [x_1,\dots,x_{k+1}]f
     =[\underbrace{\xi,\dots,\xi}_{k+1\hbox{\tiny\ times}}]f
     =\frac{f^{(k)}(\xi)}{k!}.
$$
Thus \eq{3A} is proved. \end{proof}

The following result, called Cauchy Mean Value Theorem, is due to
R\"atz and Russel \cite{RR}.

\begin{Cor} Let $f,g\in\D k$ such that $g^{(k)}(\xi)\not=0$ for
$\xi\in]a,b[$ and let $a\le x_1\le \cdots\le x_{k+1}\le b$ with
$x_1<x_{k+1}$. Then there exists $\xi\in]x_1,x_{k+1}[$ such that
\Eq{3B}{
  \frac{[x_1,\dots,x_{k+1}]f}{[x_1,\dots,x_{k+1}]g}
    =\frac{f^{(k)}(\xi)}{g^{(k)}(\xi)}.
}
\end{Cor}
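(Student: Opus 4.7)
The plan is to apply Theorem 1 exactly as in the proof of Corollary 3, i.e., with $m=0$ and the monomial system $w_i(x)=x^{i-1}$ for $i=1,\dots,k$, but without collapsing the second function to $w_{k+1}(x)=x^{k}$; instead I keep the given $f$ and $g$ from the hypothesis.

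First I would verify the assumptions of Theorem 1. Since $m=0$, condition \eq{1A} is vacuous. For \eq{1}, the confluent Wronskian $\V_n(\xi)=\W(w_1,\dots,w_n)(\xi,\dots,\xi)$ of $1,x,\dots,x^{n-1}$ is the determinant of a lower-triangular matrix with diagonal entries $0!,1!,\dots,(n-1)!$, hence equals the nonzero constant $0!\,1!\cdots(n-1)!$ at every $\xi\in[a,b]$ and every $n\le k$. Thus Theorem 1 applies and yields an intermediate point $\xi\in\,]x_1,x_{k+1}[$ satisfying the product identity \eq{2}.

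The next step is to simplify the two confluent Wronskians on the $\xi$-side of \eq{2}. Since $w_i$ is a polynomial of degree at most $k-1$, the entry $w_i^{(k)}(\xi)$ vanishes for every $i\le k$, so cofactor expansion along the last column gives $\W(w_1,\dots,w_k,h)(\xi,\dots,\xi)=h^{(k)}(\xi)\cdot 0!\,1!\cdots(k-1)!$ for $h=f$ and $h=g$. The common factor cancels, so \eq{2} reduces to
$$
  f^{(k)}(\xi)\cdot\W(w_1,\dots,w_k,g)(x_1,\dots,x_{k+1})
  =g^{(k)}(\xi)\cdot\W(w_1,\dots,w_k,f)(x_1,\dots,x_{k+1}).
$$

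Finally, I would rewrite the two remaining determinants as divided differences by dividing through by the generalized Vandermonde $\W(w_1,\dots,w_k,w_{k+1})(x_1,\dots,x_{k+1})$ that appears as the denominator of $[x_1,\dots,x_{k+1}]$ in the definition preceding Corollary 3. This produces
$$
  f^{(k)}(\xi)\cdot[x_1,\dots,x_{k+1}]g
  =g^{(k)}(\xi)\cdot[x_1,\dots,x_{k+1}]f,
$$
and \eq{3B} follows upon dividing by $g^{(k)}(\xi)\cdot[x_1,\dots,x_{k+1}]g$. The only delicate point -- and hence the main obstacle -- is to justify that this last denominator is nonzero: $g^{(k)}(\xi)\neq 0$ is given, while the nonvanishing of $[x_1,\dots,x_{k+1}]g$ is not hypothesized directly. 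I would resolve this by invoking Corollary 3 for $g$, which identifies $[x_1,\dots,x_{k+1}]g$ with $g^{(k)}(\eta)/k!$ for some $\eta\in\,]x_1,x_{k+1}[$, and the latter is nonzero by the same assumption on $g^{(k)}$.
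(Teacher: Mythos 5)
Your proposal is correct and follows essentially the same route as the paper: both apply Theorem 1 with $m=0$ and the monomial system underlying the definition of divided differences, reduce the confluent Wronskians at $\xi$ to $f^{(k)}(\xi)$ and $g^{(k)}(\xi)$, and invoke Corollary 3 for $g$ to guarantee $[x_1,\dots,x_{k+1}]g\not=0$. The only difference is presentational (the paper secures the nonvanishing denominator first and then applies Theorem 1 to the ratio, whereas you apply Theorem 1 first and justify the division at the end), which does not affect the validity of the argument.
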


\begin{proof} Applying Corollary 3 for the function $g$ first,
we can observe that
$$
   [x_1,\dots,x_{k+1}]g\not=0.
$$
Hence the left hand side of \eq{3B} exists. Clearly,
$$
  \frac{[x_1,\dots,x_{k+1}]f}{[x_1,\dots,x_{k+1}]g}
    = \frac{\W(w_1,\dots,w_k,f)(x_1,\dots,x_k,x_{k+1})}
      {\W(w_1,\dots,w_k,g)(x_1,\dots,x_k,x_{k+1})}
$$
Therefore, by Theorem 1, there exists $\xi\in]x_1,x_{k+1}[$ such that
$$
  \frac{[x_1,\dots,x_{k+1}]f }{ [x_1,\dots,x_{k+1}]g}
    = \frac{\W(w_1,\dots,w_k,f)(\xi,\dots,\xi,\xi)}
       {\W(w_1,\dots,w_k,g)(\xi,\dots,\xi,\xi)}
    =\frac{f^{(k)}(\xi)}{g^{(k)}(\xi)},
$$
whence \eq{3B} follows.
\end{proof}

\section{Proof of the main result}

In the proof of Theorem 1, we shall need the following notion:
A function $f\in D^k([a,b])$ {\em vanishes $k+1$ times in} $[a,b]$ if
there exist points $x_1<\cdots<x_n$ in $[a,b]$ with $x_1<b$, $a<x_n$
and natural numbers $k_1,\dots,k_n$ with $k_1+\cdots+k_n=k+1$ such that
\Eq{4}{
  f^{(j)}(x_i)=0 \for j=0,\dots,k_i-1,\,\,i=1,\dots,n.
}
For instance, the function $f(x)=x(x-1)$ vanishes twice in $[0,1]$.
However the function $f(x)=x^2$ does not vanish twice in $[0,1]$, but
it does in $[-1,1]$, (that is, all the zeroes of $f$ should not be
concentrated at the endpoints of the interval).

We recall the following lemmas from \cite{P} and, for readers
convenience, we also provide their proofs.

\begin{Lem}
If $f,g\in D^k([a,b])$ and $f$ vanishes $k+1$ times in $[a,b]$,
then $fg$ also vanishes $k+1$ times in $[a,b]$.
\end{Lem}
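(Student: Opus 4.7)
The plan is to reuse, verbatim, the combinatorial data witnessing that $f$ vanishes $k+1$ times in $[a,b]$, and to verify that the same data witnesses the same property for $fg$. Concretely, I would start from the points $x_1<\cdots<x_n$ in $[a,b]$ with $x_1<b$, $a<x_n$, and natural numbers $k_1,\dots,k_n$ with $k_1+\cdots+k_n=k+1$ supplied by the hypothesis, so that $f^{(j)}(x_i)=0$ for $j=0,\dots,k_i-1$ and $i=1,\dots,n$. The ordering, the endpoint conditions $x_1<b$ and $a<x_n$, and the multiplicity sum $k+1$ are purely geometric constraints on the $x_i$ and the $k_i$; they do not depend on which function is under consideration, so they transfer to $fg$ at no cost. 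The only thing left to verify is the vanishing of the derivatives of $fg$ of the appropriate orders.

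This verification is a direct application of the Leibniz product rule: for each $i$ and each $0\le j\le k_i-1$,
\[
  (fg)^{(j)}(x_i)=\sum_{\ell=0}^{j}\binom{j}{\ell}f^{(\ell)}(x_i)\,g^{(j-\ell)}(x_i).
\]
Every index $\ell$ occurring in this sum satisfies $\ell\le j\le k_i-1$, so by the assumption on $f$ we have $f^{(\ell)}(x_i)=0$, and consequently each summand, and with it the entire sum, vanishes. Thus $(fg)^{(j)}(x_i)=0$ throughout the required range of $j$ and $i$, which shows that $fg$ vanishes $k+1$ times in $[a,b]$ with exactly the same witnesses as $f$.

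There is no genuine obstacle in the argument; the only point that merits a brief check is that the derivatives involved are well defined. This is guaranteed by $f,g\in D^k([a,b])$, because $j\le k_i-1\le k$ throughout, and the lower-order derivatives are continuous on all of $[a,b]$, so evaluation at each $x_i\in[a,b]$ is legitimate regardless of whether $x_i$ happens to be an endpoint.
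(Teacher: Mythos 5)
Your proof is correct and follows exactly the paper's argument: take the same witnesses $x_i$, $k_i$ from the hypothesis on $f$ and verify via the Leibniz product rule that every term in the expansion of $(fg)^{(j)}(x_i)$ contains a vanishing factor $f^{(\ell)}(x_i)$ with $\ell\le j\le k_i-1$. The paper leaves the Leibniz computation to the reader, so your version is simply a more explicit write-up of the same proof.
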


\begin{proof}
By the assumption, there are $x_1<\cdots<x_n$ in $[a,b]$ with $x_1<b$,
$a<x_n$ and $k_1,\dots,k_n\in\N$ with $k_1+\cdots+k_n=k+1$ such that
\eq{4} holds. Then, using Leibniz's Product Rule, one can check that
$$
  (fg)^{(j)}(x_i)=0 \for j=0,\dots,k_i-1,\,\,i=1,\dots,n.
$$
Thus $fg$ also vanishes $k+1$ times in $[a,b]$.
\end{proof}

\begin{Lem}
If $f\in D^k([a,b])$ vanishes $k+1$ times in $[a,b]$, then $f'$
vanishes $k$ times in $[a,b]$.
\end{Lem}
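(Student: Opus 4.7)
The plan is to produce $k$ zeros of $f'$ (counted with multiplicity, in the sense of the "vanishes $k$ times" definition) by combining two disjoint sources: the zeros $f'$ inherits directly from the higher-order vanishing of $f$, plus intermediate zeros supplied by Rolle's theorem between consecutive vanishing points.

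First, I would unpack the hypothesis: there exist $x_1<\cdots<x_n$ in $[a,b]$ with $x_1<b$, $a<x_n$ and positive integers $k_1,\dots,k_n$ summing to $k+1$ with $f^{(j)}(x_i)=0$ for $j=0,\dots,k_i-1$, $i=1,\dots,n$. Differentiating, $(f')^{(j)}(x_i)=f^{(j+1)}(x_i)=0$ for $j=0,\dots,k_i-2$, so for each index $i$ with $k_i\ge 2$ the derivative $f'$ already vanishes at $x_i$ up to order $k_i-2$. Summed over those indices, this accounts for $\sum_{i=1}^n (k_i-1)=(k+1)-n$ zeros of $f'$ with multiplicity.

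Next, for each $i=1,\dots,n-1$ I would apply Rolle's theorem to $f$ on $[x_i,x_{i+1}]$: since $f(x_i)=f(x_{i+1})=0$ and $f$ is continuous on $[a,b]$ and differentiable on $(a,b)$, there is some $y_i^*\in (x_i,x_{i+1})$ with $f'(y_i^*)=0$. These $n-1$ points are all distinct from one another and from every $x_j$, because they lie strictly between consecutive $x_j$'s. Collecting the contributions and reindexing in increasing order gives a list of distinct points in $[a,b]$ with associated multiplicities $\ell_1,\dots,\ell_m$ totalling $(k+1-n)+(n-1)=k$, at which the relevant derivatives of $f'$ vanish.

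The main thing to check—and the only real nuisance—is the end-point condition that the smallest zero of $f'$ produced is strictly less than $b$ and the largest is strictly greater than $a$. If $k_1\ge 2$, then $x_1$ itself is a zero of $f'$, and $x_1<b$ by hypothesis; if instead $k_1=1$, then necessarily $n\ge 2$ (otherwise $k_1=k+1\ge 2$), and the smallest zero of $f'$ in the list is the Rolle point $y_1^*\in(x_1,x_2)\subset(a,b)$, which is again less than $b$. The symmetric argument with $k_n$ and the largest $y_{n-1}^*$ handles the condition on the greatest zero. This completes the verification that $f'$ vanishes $k$ times in $[a,b]$.
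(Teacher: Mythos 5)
Your proof is correct and follows essentially the same route as the paper's: Rolle's theorem applied between consecutive vanishing points, combined with the zeros of $f'$ inherited from the higher-order vanishing of $f$, giving $(k+1-n)+(n-1)=k$ zeros in total. You merely make explicit the multiplicity bookkeeping and the endpoint condition ($x_1<b$, $a<x_n$) that the paper compresses into the sentence ``these equalities combined with \eq{4} yield the claim.''
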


\begin{proof}
We have \eq{4} for some $x_1<\cdots<x_n$ with
$x_1<b$, $a<x_n$ and $k_1,\dots,k_n\in\N$ with $k_1+\cdots+k_n=k+1$.
If $n=1$, then there is nothing to prove. Otherwise, by Rolle's
Mean Value Theorem, there exist $x_i<\xi_i<x_{i+1}$ such that
$$
  f'(\xi_i)=0 \for i=1,\dots,n-1.
$$
These equalities combined with \eq{4} yield that $f'$ vanishes $k$
times on $[a,b]$.
\end{proof}

The following lemma generalizes \cite[Lemma 3]{P}. The result
obtained therein corresponds the case $m=0$ below.

\begin{Lem}
Let $1\le k$, $0\le m$ be integers and $u_1,\dots,u_{m+k}\in\R^m$
such that \eq{1A} holds (if $m>0$). Let $w_1,\dots,w_m\in D^k([a,b])$
be a system of functions satisfying \eq{1} for all $\xi\in[a,b]$.
For $f\in D^k([a,b])$, define the following operators
$$
  \WW_n(f)(\xi)
   :=\Wfour w_1,\dots,w_{m+n},f;u_1,\dots,u_{m+n},0.
     (\underbrace{\xi,\dots,\xi}_{n+1\hbox{\tiny\rm\ times}}),
     \qquad n=0,\dots,k.
$$
where $\xi\in[a,b]$ if $n<k$, and $\xi\in]a,b[$ if $n=k$.
Then the following recursive formula
\Eq{5}{
  \WW_n(f)(\xi)
  = \frac{d}{d\xi} \left(\frac{\WW_{n-1}(f)(\xi)}{\V_n(\xi)}\right)
    \cdot \frac{\left[\V_n(\xi)\right]^2}{\V_{n-1}(\xi)}
}
holds for all $\xi\in[a,b]$ if $1\le n<k$, and for all $\xi\in]a,b[$ if
$n=k$. (Here $\V_0$ and $\V_n$ ($1\le n\le k$) are defined in \eq{1A}
and in \eq{1}, respectively. In the case $m=0$ we set $\V_0=0$.)
\end{Lem}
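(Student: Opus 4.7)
The plan is to apply the quotient rule to reduce \eq{5} to a single determinantal identity, and then to establish that identity via the classical Desnanot--Jacobi (Sylvester) identity.

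First, I would compute $\frac{d}{d\xi}\V_n(\xi)$ and $\frac{d}{d\xi}\WW_{n-1}(f)(\xi)$ by differentiating each determinant column by column. The first $m$ columns are constant. Differentiating the column of $\ell$-th derivatives for $\ell=0,\dots,n-2$ produces a column equal to the next one, so that contribution vanishes. Only the differentiation of the rightmost column survives, and I would denote the resulting determinants by $\widetilde{\V}_n(\xi)$ and $\widetilde{\WW}_{n-1}(f)(\xi)$: these are defined exactly like $\V_n(\xi)$ and $\WW_{n-1}(f)(\xi)$ except that the last column of $(n-1)$-th derivatives is replaced by a column of $n$-th derivatives. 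The quotient rule then shows that \eq{5} is equivalent to the single determinantal identity
$$\WW_n(f)(\xi)\cdot\V_{n-1}(\xi)=\widetilde{\WW}_{n-1}(f)(\xi)\cdot\V_n(\xi)-\WW_{n-1}(f)(\xi)\cdot\widetilde{\V}_n(\xi).$$

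To prove this identity I would apply the Desnanot--Jacobi identity to the $(m+n+1)\times(m+n+1)$ matrix $N$ whose determinant is $\WW_n(f)(\xi)$, with rows $i_1=m+n$ (the $w_{m+n}$-row) and $i_2=m+n+1$ (the $f$-row), and columns $j_1=m+n$ (containing the $(n-1)$-th derivatives) and $j_2=m+n+1$ (containing the $n$-th derivatives). Direct inspection of the five minors that appear yields: the central $(m+n-1)\times(m+n-1)$ minor equals $\V_{n-1}(\xi)$; deleting the $f$-row and the last column gives $\V_n(\xi)$; deleting the $w_{m+n}$-row and the last column gives $\WW_{n-1}(f)(\xi)$; deleting the $f$-row and the $(m+n)$-th column gives $\widetilde{\V}_n(\xi)$; and deleting the $w_{m+n}$-row and the $(m+n)$-th column gives $\widetilde{\WW}_{n-1}(f)(\xi)$. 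Substitution of these five minors into the Desnanot--Jacobi identity produces precisely the required identity.

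The main obstacle is the bookkeeping needed to match each of the five minors arising from Desnanot--Jacobi with the correct named quantity; once that matching is in place the proof is immediate and does not require induction on $n$. The distinction in the statement between $\xi\in[a,b]$ (when $n<k$) and $\xi\in\,]a,b[$ (when $n=k$) is forced only by the existence of the relevant $n$-th derivatives, since $f,w_i\in D^k([a,b])$.
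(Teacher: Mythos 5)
Your proof is correct, but it takes a genuinely different route from the paper. The paper argues via differential operators: it builds functions $v_1,\dots,v_n$ (linear combinations of the $w_i$ chosen so that the first $m$ columns can be zeroed out) that span the solution space of the $n$-th order linear equation $\WW_n(f)=0$, checks that the right-hand side of \eq{5} is also an $n$-th order linear differential operator annihilating $v_1,\dots,v_n$, and concludes that the two operators coincide because they have the same kernel and the same leading coefficient $\V_n\neq0$. You instead reduce \eq{5}, via the quotient rule and column-by-column differentiation of $\WW_{n-1}(f)$ and $\V_n$ (where indeed only the differentiation of the last column survives), to the single identity $\WW_n(f)\cdot\V_{n-1}=\widetilde{\WW}_{n-1}(f)\cdot\V_n-\WW_{n-1}(f)\cdot\widetilde{\V}_n$, and obtain this from the Desnanot--Jacobi identity applied to the matrix of $\WW_n(f)$ with the two bottom rows and two rightmost columns; I checked your identification of the five minors and it is right (the deleted columns are adjacent, so no column permutations or extra signs intervene). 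What your approach buys: it is purely algebraic, it establishes the underlying determinantal identity with no nonvanishing hypotheses at all (\eq{1} and \eq{1A} enter only when one divides by $\V_n$ and $\V_{n-1}$), and it avoids the paper's somewhat delicate appeal to the principle that an $n$-th order linear operator is determined by its kernel and leading coefficient. It also makes transparent that for $n=1$, $m=0$ the quantity $\V_0$ must be read as the empty determinant, i.e.\ $1$ (the paper's stated convention that $\V_0=0$ when $m=0$ is evidently a slip, since \eq{5} divides by $\V_{n-1}$). What the paper's approach buys is independence from the Desnanot--Jacobi identity and the explicit construction of the kernel functions $v_i$ with $\WW_k(v_i)=0$, which the paper reuses later in the proof of Theorem 1, so that material is not wasted there.
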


\begin{proof} The argument described below works for $m\not=0$. The
$m=0$ case is completely analogous, therefore omitted.

The vectors $u_1,\dots,u_m$ are linearly independent
in $\R^m$, hence they form a basis of $\R^m$. Thus, we can find real
numbers $\gamma_{1,n},\dots,\gamma_{m,n}$ such that, for $n=1,\dots,k$,
\Eq{5A}{
  -u_{m+n}=\gamma_{1,n}u_1+\cdots+\gamma_{m,n}u_m.
}
Then define the functions $v_n:[a,b]\to\R$ by
\Eq{5B}{
  v_n:= w_{m+n}+\gamma_{1,n}w_1+\cdots+\gamma_{m,n}w_m.
}
Now we show that the functions $v_1,\dots,v_n$ form a linearly
independent system of solutions of the equation
\Eq{6}{
  \WW_n(f)(\xi)=0,\qquad \xi\in]a,b[
}
which is an $n$th order homogeneous linear differential equation for
the unknown function $f$.

To see this, we first compute $\WW_n(v_j)$ for any $1\le j\le k$ and
$0\le n\le k$. Multiplying the  first $m$ rows of the determinant
$\WW_n(v_j)$ by $\gamma_{1,j},\dots,\gamma_{m,j}$, respectively,
subtracting their sum from the last row, then using \eq{5A}, we get
\begin{eqnarray*}
  \WW_n(v_j)(\xi)
   &=&\Wfour w_1,\dots,w_{m+n},w_{m+j}+\sum_{i=1}^m\gamma_{i,j}w_i;%
      u_1,\dots,u_{m+n},0.(\xi,\dots,\xi)\\
   &=&\Wfour w_1,\dots,w_{m+n},w_{m+j};%
      u_1,\dots,u_{m+n},-\sum_{i=1}^m\gamma_{i,j}u_i.(\xi,\dots,\xi)\\
   &=&\Wfour w_1,\dots,w_{m+n},w_{m+j};%
      u_1,\dots,u_{m+n},u_{m+j}.(\xi,\dots,\xi)=0
\end{eqnarray*}
If $j\le n$, then this formula results $\WW_n(v_j)=0$. On the other
hand, with $j=n+1$, we have $\WW_n(v_{n+1})=\V_{n+1}$.

The function $v_1$ cannot be identically zero because
$\WW_0(v_1)=\V_1\not=0$. Hence $\{v_1\}$ is a linearly independent set
of solutions of $\WW_1(f)=0$. Assume now that $v_1,\dots,v_n$ form a
linearly independent system of solutions of $\WW_n(f)=0$. The function
$v_{n+1}$ is not a solution of this equation, hence, it cannot be a
linear combination of $v_1,\dots,v_n$. Thus, $v_1,\dots,v_{n+1}$ is also
a linearly independent system.

Temporarily, denote the operator defined by the right hand side of
\eq{5} by $\WW^*_n(f)$. It is clear that $\WW^*_n(f)$ is also an
$n$th-order linear differential operator of $f$. We show that
$v_1,\dots,v_n$ also solves the equation $\WW^*_n(f)=0$. This is
obvious if $f=v_1,\dots,v_{n-1}$ (since these functions are solutions
of the equation $\WW_{n-1}(f)=0$). On the other hand
$$
  \WW^*_n(v_n)(\xi)
  = \frac{d}{d\xi} \left(\frac{\WW_{n-1}(v_n)(\xi)}{\V_n(\xi)}\right)
    \cdot \frac{\left[\V_n(\xi)\right]^2}{\V_{n-1}(\xi)}
  = \frac{d}{d\xi} \left(\frac{\V_n(\xi)}{\V_n(\xi)}\right)
    \cdot \frac{\left[\V_n(\xi)\right]^2}{\V_{n-1}(\xi)}=0.
$$

Observe that the coefficients of $f^{(n)}$ in $\WW_n(f)$ and
$\WW^*_n(f)$ are equal to $\V_n$ which does not vanish anywhere
in $[a,b]$. Therefore, having the same solution space, these two
operators have to coincide for all $1\le n\le k$.
\end{proof}

\begin{Lem}
Let $1\le k$, $0\le m$ be integers and $u_1,\dots,u_{m+k}\in\R^m$
such that \eq{1A} holds (if $m>0$). Let $w_1,\dots,w_m\in D^k([a,b])$
be a system of functions satisfying \eq{1} for all $\xi\in[a,b]$.
Assume that the function $f\in D^k([a,b])$ vanishes
$k+1$ times in $[a,b]$. Then, for each $0\le n\le k$, the function
$\WW_n(f)$ defined in Lemma 3 vanishes $k+1-n$ times in $[a,b]$.
\end{Lem}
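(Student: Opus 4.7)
The plan is to proceed by induction on $n$ from $0$ up to $k$, invoking the recursion of Lemma~3 to reduce the $n$-th statement to the $(n-1)$-st.

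For the base case $n=0$, expanding the $(m+1)\times(m+1)$ determinant in the definition of $\WW_0(f)$ along its last row, which has the form $(0,\dots,0,f(\xi))$, yields $\WW_0(f)=\V_0\cdot f$ when $m>0$ and $\WW_0(f)=f$ when $m=0$. Since $\V_0$ is a nonzero constant, $\WW_0(f)$ has the same zeros with the same multiplicities as $f$, so it vanishes $k+1$ times in $[a,b]$ as required.

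For the inductive step, assume $\WW_{n-1}(f)$ vanishes $k+2-n$ times in $[a,b]$. Because $\V_n$ is nowhere zero on $[a,b]$, the reciprocal $1/\V_n$ is a well-defined function of sufficient smoothness, and Lemma~1 applied to the product $\WW_{n-1}(f)\cdot(1/\V_n)$ shows that $\WW_{n-1}(f)/\V_n$ still vanishes $k+2-n$ times. Lemma~2 then implies that
\[
  \frac{d}{d\xi}\Bigl(\frac{\WW_{n-1}(f)(\xi)}{\V_n(\xi)}\Bigr)
\]
vanishes $k+1-n$ times in $[a,b]$. Finally, by the recursion formula \eq5 of Lemma~3, $\WW_n(f)$ is equal to this derivative multiplied by the nowhere-vanishing factor $[\V_n]^2/\V_{n-1}$, and a second application of Lemma~1 yields that $\WW_n(f)$ vanishes $k+1-n$ times, closing the induction.

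The main technical point is bookkeeping regularity so that Lemmas~1 and~2, with their parameters shifted from $k$ down to $k-n$, actually apply at each stage. Since $\WW_{n-1}(f)$ only involves derivatives of $f$ and of the $w_i$ of order at most $n-1$, it lies in $D^{k-n+1}([a,b])$, which is exactly the regularity class needed to assert that it vanishes $k+2-n$ times and to differentiate it via Lemma~2. The case $n=k$ is slightly special, since Lemma~3 provides the recursion only on $]a,b[$; however, this is enough because the assertion ``$\WW_k(f)$ vanishes once in $[a,b]$'' amounts precisely to the existence of a zero in the open interval $]a,b[$, which is produced by Rolle's theorem through Lemma~2.
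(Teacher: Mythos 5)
Your proposal is correct and follows essentially the same route as the paper: induction on $n$, with the base case $\WW_0(f)=\V_0 f$ and the inductive step obtained by applying Lemma~1 (twice, for the two nowhere-vanishing factors) and Lemma~2 to the right-hand side of the recursion \eq5. The paper's own proof is terser; your extra bookkeeping of the regularity class of $\WW_{n-1}(f)$ and of the $n=k$ endpoint case is a welcome but not essentially different elaboration.
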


\begin{proof}
We prove by induction. If $n=0$, then $\WW_0(f)=\V_0f$, hence, in
this case, there is nothing to prove. Let $n\ge1$ and assume that
$\WW_{n-1}(f)$ vanishes $k+1-(n-1)$ times. Then, applying Lemma 1
and Lemma 2, one sees that the function
$$
  \frac{d}{d\xi} \left(\frac{\WW_{n-1}(f)(\xi)}{\V_n(\xi)}\right)
    \cdot \frac{\left[\V_n(\xi)\right]^2}{\V_{n-1}(\xi)}
  \qquad (\xi\in[a,b])
$$
vanishes $k+1-(n-1)-1=k+1-n$ times. Therefore, due to the recursive
formula established in Lemma 3, $\WW_{n}(f)$ vanishes $k+1-n$ times.
\end{proof}

Now we are ready to prove the main result of the paper.

\begin{proof}[Proof of Theorem 1] Let $x_1\le\cdots\le x_{k+1}$
be in $[a,b]$ with $\min x_i<\max x_i$. Then there exist a
permutation $\pi$ of $\{1,\dots,x_{k+1}\}$, $n\in\N$,
$\xi_1<\cdots<\xi_n$ in $[a,b]$ and $k_1,\dots,k_n\in\N$ with
$k_1+\cdots+k_n=k+1$ such that
\Eq{9}{
  (x_{\pi(1)},\dots,x_{\pi(k+1)})=
  (\underbrace{\xi_1,\dots,\xi_1}_{k_1\hbox{\tiny\ times}},
    \quad\dots,\quad
   \underbrace{\xi_n,\dots,\xi_n}_{k_n\hbox{\tiny\ times}})
}
holds. Define the function $F:[a,b]\to\R$ by
$$
  F(x)
   :=\left|\begin{array}{ccccccccc}
        u_{1,1}        & \!\!\!\dots\!\!\! & u_{1,m}   &
        w_1(\xi_1)     & \!\!\!\dots\!\!\! & w_1^{(k_1-1)}(\xi_1)
                       & \dots &            w_1^{(k_n-1)}(\xi_n)
                       & w_1(x) \\
        \vdots         & & \vdots   &
        \vdots         & & \vdots & & \vdots & \vdots \\
        u_{m+k,1}      & \!\!\!\dots\!\!\! & u_{m+k,m} &
        w_{m+k}(\xi_1) & \!\!\!\dots\!\!\! & w_{m+k}^{(k_1-1)}(\xi_1)
                       & \dots &            w_{m+k}^{(k_n-1)}(\xi_n)
                       & w_{m+k}(x) \\
        p_{1}          & \!\!\!\dots\!\!\! & p_{m} &
        f(\xi_1)       & \!\!\!\dots\!\!\! & f^{(k_1-1)}(\xi_1)
                       & \dots &             f^{(k_n-1)}(\xi_n)
                       & f(x) \\
        q_{1}          & \!\!\!\dots\!\!\! & q_{m} &
        g(\xi_1)       & \!\!\!\dots\!\!\! & g^{(k_1-1)}(\xi_1)
                       & \dots &             g^{(k_n-1)}(\xi_n)
                       & g(x)
    \end{array}\right|.
$$
It is obvious at once that
$$
  F^{(j)}(\xi_i)=0 \for j=0,\dots,k_i-1,\,\,i=1,\dots,n,
$$
therefore $F$ vanishes $k+1$ times in $[a,b]$. Thus, by Lemma 4,
there exists $\xi\in]a,b[$ such that
\Eq{10}{
   \WW_k(F)(\xi)=
   \Wfour w_1,\dots,w_{m+k},F;u_1,\dots,u_{m+k},0.
     (\underbrace{\xi,\dots,\xi}_{k+1\hbox{\tiny\rm\ times}})=0.
}
Now determine the contstants $\gamma_{i,n}$ such that they satisfy
\eq{5A} and define $v_1,\dots,v_k$ by \eq{5B}. Similarly, choose
$\alpha_1,\dots,\alpha_m$ and $\beta_1,\dots,\beta_m$ such that
\Eq{11}{
   -p=\alpha_1u_1+\cdots+\alpha_mu_m
   \qquad \hbox{and}\qquad
   -q=\beta_1u_1+\cdots+\beta_mu_m.
}
and define
\Eq{12}{
   \phi=f+\alpha_1w_1+\cdots+\alpha_mw_m
   \qquad \hbox{and}\qquad
   \psi=g+\beta_1w_1+\cdots+\beta_mw_m.
}
Using these choices of the constants, add linear combination of the
first $m$ rows of $F$ to the rest of the rows to obtain
\begin{eqnarray*}
  F(x)
   &:=&\left|\begin{array}{ccccccccc}
        u_{1,1}        & \!\!\!\dots\!\!\! & u_{1,m}   &
        w_1(\xi_1)     & \!\!\!\dots\!\!\! & w_1^{(k_1-1)}(\xi_1)
                       & \dots &             w_1^{(k_n-1)}(\xi_n)
                       & w_1(x) \\
        \vdots         & & \vdots   &
        \vdots         & & \vdots & & \vdots & \vdots \\
        u_{m,1}        & \!\!\!\dots\!\!\! & u_{m,m}   &
        w_m(\xi_1)     & \!\!\!\dots\!\!\! & w_m^{(k_1-1)}(\xi_1)
                       & \dots &             w_m^{(k_n-1)}(\xi_n)
                       & w_m(x) \\
        0              & \!\!\!\dots\!\!\! & 0         &
        v_{1}(\xi_1)   & \!\!\!\dots\!\!\! & v_{1}^{(k_1-1)}(\xi_1)
                       & \dots &             v_{1}^{(k_n-1)}(\xi_n)
                       & v_{1}(x) \\
        \vdots         & & \vdots   &
        \vdots         & & \vdots & & \vdots & \vdots \\
        0              & \!\!\!\dots\!\!\! & 0         &
        v_{k}(\xi_1)   & \!\!\!\dots\!\!\! & v_{k}^{(k_1-1)}(\xi_1)
                       & \dots &             v_{k}^{(k_n-1)}(\xi_n)
                       & v_{k}(x) \\
        0              & \!\!\!\dots\!\!\! & 0     &
        \phi(\xi_1)    & \!\!\!\dots\!\!\! & \phi^{(k_1-1)}(\xi_1)
                       & \dots &             \phi^{(k_n-1)}(\xi_n)
                       & \phi(x) \\
        0              & \!\!\!\dots\!\!\! & 0     &
        \psi(\xi_1)    & \!\!\!\dots\!\!\! & \psi^{(k_1-1)}(\xi_1)
                       & \dots &             \psi^{(k_n-1)}(\xi_n)
                       & \psi(x)
    \end{array}\right| \\
   &=& \V_0\cdot\left|\begin{array}{cccccc}
        v_{1}(\xi_1)   & \!\!\!\dots\!\!\! & v_{1}^{(k_1-1)}(\xi_1)
                       & \dots &             v_{1}^{(k_n-1)}(\xi_n)
                       & v_{1}(x) \\
        \vdots         & & \vdots & & \vdots & \vdots \\
        v_{k}(\xi_1)   & \!\!\!\dots\!\!\! & v_{k}^{(k_1-1)}(\xi_1)
                       & \dots &             v_{k}^{(k_n-1)}(\xi_n)
                       & v_{k}(x) \\
        \phi(\xi_1)    & \!\!\!\dots\!\!\! & \phi^{(k_1-1)}(\xi_1)
                       & \dots &             \phi^{(k_n-1)}(\xi_n)
                       & \phi(x) \\
        \psi(\xi_1)    & \!\!\!\dots\!\!\! & \psi^{(k_1-1)}(\xi_1)
                       & \dots &             \psi^{(k_n-1)}(\xi_n)
                       & \psi(x)
    \end{array}\right|.
\end{eqnarray*}
Expanding by the last column, we get
$$
  F(x)=\sum_{i=1}^k C_i\cdot v_i(x) - A \phi(x)+ B \psi(x),
$$
where $A,B,C_i$ are the values of the corresponding subdeterminants.
Substituting the above form of $F$ into \eq{10}, and using that
$\WW_k(v_i)=0$, we get that
\Eq{13}{
        A \cdot\WW_k(\phi)(\xi) = B \cdot\WW_k(\psi)(\xi)
}
In the rest of the proof we show that \eq{13} reduces to \eq{2}.

Indeed, adding a certain linear combination to the last row of
$\WW_k$, we get
\begin{eqnarray*}
  \WW_k(\phi)(\xi)
   &=& \Wfour w_1,\dots,w_{m+k},f+\sum_{i=1}^m\alpha_iw_i;%
        u_1,\dots,u_{m+k},0.(\xi,\dots,\xi) \\
   &=& \Wfour w_1,\dots,w_{m+k},f;%
        u_1,\dots,u_{m+k},-\sum_{i=1}^m\alpha_iu_i.(\xi,\dots,\xi)\\
   &=& \Wfour w_1,\dots,w_{m+k},f;%
        u_1,\dots,u_{m+k},p.(\xi,\dots,\xi).
\end{eqnarray*}
For the constant $A$, due to its origin, we have
$$
  A=\V_0\cdot\left|\begin{array}{ccccc}
        v_{1}(\xi_1)   & \!\!\!\dots\!\!\! & v_{1}^{(k_1-1)}(\xi_1)
                       & \dots &             v_{1}^{(k_n-1)}(\xi_n)\\
        \vdots         & & \vdots & & \vdots \\
        v_{k}(\xi_1)   & \!\!\!\dots\!\!\! & v_{k}^{(k_1-1)}(\xi_1)
                       & \dots &             v_{k}^{(k_n-1)}(\xi_n)\\
        \psi(\xi_1)    & \!\!\!\dots\!\!\! & \psi^{(k_1-1)}(\xi_1)
                       & \dots &             \psi^{(k_n-1)}(\xi_n)
    \end{array}\right|.
$$
Now, using an argument similar to that applied in the computation of
$F$, one can get that
\begin{eqnarray*}
  A &=&\left|\begin{array}{cccccccc}
        u_{1,1}        & \!\!\!\dots\!\!\! & u_{1,m}   &
        w_1(\xi_1)     & \!\!\!\dots\!\!\! & w_1^{(k_1-1)}(\xi_1)
                       & \dots &             w_1^{(k_n-1)}(\xi_n)\\
        \vdots         & & \vdots   &
        \vdots         & & \vdots & & \vdots \\
        u_{m,1}        & \!\!\!\dots\!\!\! & u_{m,m}   &
        w_m(\xi_1)     & \!\!\!\dots\!\!\! & w_m^{(k_1-1)}(\xi_1)
                       & \dots &             w_m^{(k_n-1)}(\xi_n)\\
        0              & \!\!\!\dots\!\!\! & 0         &
        v_{1}(\xi_1)   & \!\!\!\dots\!\!\! & v_{1}^{(k_1-1)}(\xi_1)
                       & \dots &             v_{1}^{(k_n-1)}(\xi_n)\\
        \vdots         & & \vdots   &
        \vdots         & & \vdots & & \vdots \\
        0              & \!\!\!\dots\!\!\! & 0         &
        v_{k}(\xi_1)   & \!\!\!\dots\!\!\! & v_{k}^{(k_1-1)}(\xi_1)
                       & \dots &             v_{k}^{(k_n-1)}(\xi_n)\\
        0              & \!\!\!\dots\!\!\! & 0     &
        \psi(\xi_1)    & \!\!\!\dots\!\!\! & \psi^{(k_1-1)}(\xi_1)
                       & \dots &             \psi^{(k_n-1)}(\xi_n)
      \end{array}\right| \\
    &=&\left|\begin{array}{cccccccc}
        u_{1,1}        & \!\!\!\dots\!\!\! & u_{1,m}   &
        w_1(\xi_1)     & \!\!\!\dots\!\!\! & w_1^{(k_1-1)}(\xi_1)
                       & \dots &            w_1^{(k_n-1)}(\xi_n)\\
        \vdots         & & \vdots   &
        \vdots         & & \vdots & & \vdots \\
        u_{m+k,1}      & \!\!\!\dots\!\!\! & u_{m+k,m} &
        w_{m+k}(\xi_1) & \!\!\!\dots\!\!\! & w_{m+k}^{(k_1-1)}(\xi_1)
                       & \dots &            w_{m+k}^{(k_n-1)}(\xi_n)\\
        q_{1}          & \!\!\!\dots\!\!\! & q_{m} &
        g(\xi_1)       & \!\!\!\dots\!\!\! & g^{(k_1-1)}(\xi_1)
                       & \dots &             g^{(k_n-1)}(\xi_n)
      \end{array}\right|.\\
    &=& \Wfour w_1,\dots,w_{m+k},g;%
        u_1,\dots,u_{m+k},q.(x_1,\dots,x_{k+1}).
\end{eqnarray*}
Thus, we have checked that the left hand side of \eq{13} coincides with
that of \eq{2}. The equality of the right hand sides follows similarly,
and therefore, the proof is complete.
\end{proof}

We now derive a useful consequence of Theorem 1.

\begin{Thm}
Let $I\subset\R$ be an interval and $[a,b]$ be a proper subinterval
of$I$. Let $1\le k$, $1\le m$ be integers and $y_1,\dots,y_m\in
I\setminus[a,b]$. Assume that $w_1,\dots,w_{m+k}:I\to\R$ are
sufficently many times differentiable functions such that
\Eq{14}{
  \W(w_1,\dots,w_{m+n})(y_1,\dots,y_m,
   \underbrace{\xi,\dots,\xi}_{n\hbox{\tiny\rm\ times}})\not=0
}
for all $\xi\in [a,b]$ and $n=0,\dots,k$. Then, for all
nonidentical points $x_1,\dots,x_{k+1}\in [a,b]$, and functions
sufficently many times differentiable$f,g:I\to\R$, there exists an
intermediate point $\xi\in]\min x_i,\max x_i[$ such that
\Eq{15}{
  \W(w_1,\dots,w_{m+k},f)(y_1,\dots,y_m,
   \underbrace{\xi,\dots,\xi}_{k+1\hbox{\tiny\rm\ times}})
  \cdot \W(w_1,\dots,w_{m+k},g)(y_1,\dots,y_m,x_1,\dots,x_{k+1})
  \quad
}
$$
  \qquad\quad
  =\W(w_1,\dots,w_{m+k},g)(y_1,\dots,y_m,
   \underbrace{\xi,\dots,\xi}_{k+1\hbox{\tiny\rm\ times}})
  \cdot \W(w_1,\dots,w_{m+k},f)(y_1,\dots,y_m,x_1,\dots,x_{k+1}).
$$
\end{Thm}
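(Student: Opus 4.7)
The plan is to reduce Theorem 2 directly to Theorem 1 by reinterpreting the evaluations at the fixed outside points $y_1,\dots,y_m$ (which I take to be pairwise distinct, this being the natural case) as the auxiliary vector data $u_1,\dots,u_{m+k}\in\R^m$ appearing in Theorem 1. Concretely, I would set
$$
   u_i := (w_i(y_1),\dots,w_i(y_m))\in\R^m \for i=1,\dots,m+k,
$$
together with $p := (f(y_1),\dots,f(y_m))$ and $q := (g(y_1),\dots,g(y_m))$.

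With this choice, for any $h\in\{w_1,\dots,w_{m+k},f,g\}$, any compatible value-vector $r\in\R^m$, and any evaluation pattern $z_1,\dots,z_{k+1}\in[a,b]$, the first $m$ columns of the Theorem 1 determinant
$$
  \Wfour w_1,\dots,w_{m+k},h;u_1,\dots,u_{m+k},r.(z_1,\dots,z_{k+1})
$$
consist, by construction, of the values $w_i(y_j)$ and $h(y_j)$, with no derivatives involved. Since $y_j\in I\setminus[a,b]$ while $z_i\in[a,b]$, no $y_j$ can coincide with any $z_i$, so this is exactly the $(m+k+1)\times(m+k+1)$ matrix produced by the paper's definition of $\W(w_1,\dots,w_{m+k},h)(y_1,\dots,y_m,z_1,\dots,z_{k+1})$.

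The next step is to verify the hypotheses of Theorem 1 under this dictionary. Condition \eq{1A} becomes $\det[w_i(y_j)]_{i,j=1}^m\neq 0$, which is exactly \eq{14} at $n=0$; and condition \eq{1} at a point $\xi\in[a,b]$ for $n=1,\dots,k$ becomes $\W(w_1,\dots,w_{m+n})(y_1,\dots,y_m,\xi,\dots,\xi)\neq 0$, which is \eq{14} at the same $n$. With all hypotheses met, Theorem 1 produces $\xi\in]\min x_i,\max x_i[$ satisfying \eq{2}, and substituting back through the dictionary turns \eq{2} into \eq{15} verbatim.

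I do not anticipate a substantive obstacle beyond the notational bookkeeping above: the only subtle point is to confirm that the abstract $u$-vector columns in Theorem 1 can be honestly read off as the $w_i(y_j)$ columns of the extended Wronskian, and that the hypothesis $y_j\in I\setminus[a,b]$ prevents any collision with the evaluation points inside $[a,b]$. Once this identification is in place, the conclusion follows immediately from Theorem 1.
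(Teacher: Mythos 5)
Your reduction is exactly the paper's: the paper also proves Theorem~2 by packaging the data at the outside points $y_1,\dots,y_m$ into the vectors $u_i$, $p$, $q$ and invoking Theorem~1, and your verification of hypotheses \eq{1A} and \eq{1} from \eq{14} matches. The only thing you leave out is the case of coincident $y_j$'s, which the theorem's statement does not exclude; the paper covers it by grouping the $y$'s into distinct values $\eta_1,\dots,\eta_l$ with multiplicities $m_1,\dots,m_l$ and taking
$$
  u_i:=(w_i(\eta_1),\dots,w_i^{(m_1-1)}(\eta_1),\ \dots,\ w_i(\eta_l),\dots,w_i^{(m_l-1)}(\eta_l)),
$$
and likewise for $p$ and $q$, so that the derivative columns of the generalized Wronskian at repeated $y$'s are absorbed into the $u$-block; with that routine extension of your dictionary the argument is complete.
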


\begin{proof}
Let $\pi$ be a permutation of the set
$\{1,\dots,m\}$, $\eta_1,\dots,\eta_l\in I$, and $m_1,\dots,m_l\in\N$
with $m_1+\cdots+m_l=m$ such that
$$
  (y_{\pi(1)},\dots,y_{\pi(m)})=
  (\underbrace{\eta_1,\dots,\eta_1}_{m_1\hbox{\tiny\ times}},
  \quad\dots,\quad
  \underbrace{\eta_l,\dots,\eta_l}_{m_l\hbox{\tiny\ times}}).
$$
Set, for $i=1,\dots,m+k$,
$$
  u_i:=(u_{i1},\dots,u_{im})
     :=(w_i(\eta_1),\dots,w_i^{(m_1-1)}(\eta_1),
     \quad\dots,\quad w_i(\eta_l),\dots,w_i^{(m_l-1)}(\eta_l)),
$$
and
$$
  p:=(p_{1},\dots,p_{m})
   :=(f(\eta_1),\dots,f^{(m_1-1)}(\eta_1),
   \quad\dots,\quad f(\eta_l),\dots,f^{(m_l-1)}(\eta_l)),
$$
$$
  q:=(q_{1},\dots,q_{m})
   :=(g(\eta_1),\dots,g^{(m_1-1)}(\eta_1),
   \quad\dots,\quad g(\eta_l),\dots,g^{(m_l-1)}(\eta_l)).
$$
Observe, that with this notations, the conditions of Theorem 1 are
satisfied and therefore there exists $\xi$ such that \eq{2} holds. It is
immediate to see that \eq{2} is equivalent to \eq{15}.
\end{proof}

\end{document}